\renewcommand\Re{\operatorname{Re}}
\newtheorem{theorem}{Theorem}
\newtheorem{lemma}[theorem]{Lemma}
\newtheorem{corollary}[theorem]{Corollary}
\theoremstyle{remark}
\newtheorem{remark}[theorem]{Remark}
\begin{document}

\title{\textbf{Stein-Tomas Restriction Theorem via Spectral Measure on Metric Measure Spaces} }

\author{Xi Chen}

\keywords{Stein-Tomas restriction theorem, spectral measure estimates, asymptotically conic manifolds.
}
\date{}

\maketitle

\begin{abstract}The Stein-Tomas restriction theorem on Euclidean space says one can meaningfully restrict $\hat{f}$ to the unit sphere of $\mathbb{R}^n$ provided $f \in L^p(\mathbb{R}^n)$ with $1 < p < 2$. This result can be rewritten in terms of the estimates for the spectral measure of Laplacian. Guillarmou, Hassell and Sikora formulated a sufficient condition of the restriction theorem, via spectral measure, on abstract metric measure spaces. But they only proved the result in a special case. The present note aims to give a complete proof. In the end, it will be applied to the restriction theorem on asymptotically conic manifolds.\end{abstract}

\section{Introduction}

Stein \cite{Beijing lecture} and Tomas \cite{Tomas} showed the well-known restriction theorem for Fourier transform to sphere $$\int_{\mathbb{S}^{n - 1}} |\hat{f}|^2 \, d\sigma \leq C \|f\|_{L^p(\mathbb{R}^n)}^2$$ for $p \in [1, 2(n + 1)/(n + 3)].$ This pioneering work stimulates the study of the general restriction problem: given a function $f \in L^p(\mathbb{R}^n)$ with $1 < p < 2$, the question is that "which $p$ will allow one to meaningfully restrict $\hat{f}$ to a subset of $\mathbb{R}^n$?" It is notoriously connected to a few problems in harmonic analysis and dispersive PDEs, including spectral multipliers, Kakeya conjecture, Strichartz estimates etc. We refer the readers to a survey by Tao \cite{Restriction survey} for more information on this subject.

Returning to the Stein-Tomas estimates, one can alternatively write that in terms of the restriction operator to hypersphere, $$R(f)(\xi) = \int_{\mathbb{R}^n} e^{- 2 \pi i x \cdot \xi} f(x) \, dx, \quad |\xi| = 1.$$  Then the restriction theorem reads that $R^\ast R$ is bounded in the sense that $$L^p(\mathbb{R}^n) \longrightarrow L^{p^\prime}(\mathbb{R}^n).$$ Noting that the spectral projection $E_{\sqrt{\Delta}}(\lambda)$ of $\sqrt{\Delta}$ on $\mathbb{R}^n$ is $\mathcal{F}^{-1}(\chi_{B(0, \lambda)}) \mathcal{F}$, we find that the kernel of $R^\ast R$, $(2\pi)^{-n} \int_{|\xi| = 1} e^{i (x - y) \cdot \xi} \, d\xi$, is indeed $dE_{\sqrt{\Delta}}(1)$, where $dE_{\sqrt{\Delta}}(\lambda)$ is the spectral measure. Therefore, one may rewrite the restriction theorem as  $$ \|dE_{\sqrt{\Delta}}(\lambda)\|_{L^p \rightarrow L^{p^\prime}} (= \lambda^{n(2/p - 1) - 1} \|dE_{\sqrt{\Delta}}(1)\|_{L^p \rightarrow L^{p^\prime}}) \leq C \lambda^{n(2/p - 1) - 1},$$ provided $p \in [1, 2(n + 1)/(n + 3)].$

Due to this relation between restriction estimates and spectral measure, it is legitimat to seek an abstract restriction theorem under certain assumption on spectral measure. Guillarmou, Hassell and Sikora \cite{Guillarmou-Hassell-Sikora} formulated following theorem on metric measure spaces. \begin{theorem}\label{abstract restriction theorem}Consider an $n$-dimensional metric measure space $(X, d, \mu)$ and let $L$ be an abstract positive self-adjoint operator on $L^2(X, d\mu)$. We have the restriction estimates \begin{equation}\label{eqn:abstract restriction}  \|dE_{\sqrt{L}}(\lambda)\|_{L^p \rightarrow L^{p^\prime}}  \leq C \lambda^{n(1/p - 1/p') - 1},\end{equation} provided $p \in [1, 2(n + 1)/(n + 3)],$ if the following conditions hold: \begin{enumerate}\renewcommand{\labelenumi}{$($\roman{enumi}$)$}
\item Factorization of spectral measure: the spectral measure of $\sqrt{L}$ can be factorized as \begin{equation}\label{eqn:factorization}dE_{\sqrt{L}}(\lambda) = \frac{1}{2\pi} P(\lambda)P(\lambda)^\ast, \end{equation} where $P(\lambda)$ is defined on $L^2(X)$;
\item Operator partition of unity: for each $\lambda$, there are are uniformly bounded operators $Q_i$ on $L^2(X)$ such that \begin{equation}\label{eqn:partition of unity}Id = \sum_{i = 1}^{N} Q_i(\lambda);\end{equation} 
\item Spectral measure estimates: given a number $\lambda$ in the essential spectrum of $\sqrt{L}$, if $\mu$ is a number very close to $\lambda$, say $\mu/\lambda \in (1 - \delta, 1 + \delta)$ with small positive number $\delta$, then for any $1 \leq i \leq N(\lambda)$ we have \begin{equation}\label{eqn:spectral measure estimates}\bigg|Q_i(\lambda) \frac{d^k}{d \mu^k}\Big(dE_{\sqrt{L}}(\mu)\Big) Q_i^\prime(\lambda)\bigg| \leq C \mu^{n - 1 - k} (1 + \mu w(z, z'))^{-(n - 1)/2 + k},
\end{equation}provided a nonnegative function $w(z, z')$ on $X \times X$.
\end{enumerate}\end{theorem}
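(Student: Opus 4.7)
The plan is to apply Stein's complex interpolation to an analytic family of operators built from the spectral calculus of $\sqrt{L}$, using the factorization (i) for positivity, the partition of unity (ii) for localization, and the kernel bounds (iii) for pointwise control at the low end of the strip. First, condition (ii) allows the decomposition
\begin{equation*}
dE_{\sqrt{L}}(\lambda) = \sum_{i,j=1}^{N} Q_i(\lambda)\, dE_{\sqrt{L}}(\lambda)\, Q_j(\lambda).
\end{equation*}
Since (i) makes $dE_{\sqrt{L}}(\lambda) = (2\pi)^{-1} P(\lambda)P(\lambda)^\ast$ non-negative, a standard $TT^{\ast}$ symmetrization combined with Cauchy--Schwarz reduces the problem to bounding each diagonal piece $Q_i(\lambda)\, dE_{\sqrt{L}}(\lambda)\, Q_i^\prime(\lambda)$ appearing in hypothesis (iii), in the norm $L^p \to L^{p'}$.

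Next I would introduce an analytic family $T_z(\lambda)$, holomorphic in $z$ in a strip containing a base point $z_0$, whose value at $z_0$ is the diagonal piece above. A natural candidate is the Riesz-type family
\begin{equation*}
T_z(\lambda) = \frac{1}{\Gamma(z+1)} \int \chi(\mu/\lambda)\, (\mu - \lambda)_+^{z}\, Q_i(\lambda)\, dE_{\sqrt{L}}(\mu)\, Q_i^\prime(\lambda)\, d\mu,
\end{equation*}
with $\chi \in C_c^\infty$ supported in $(1-\delta, 1+\delta)$ and $\chi(1) = 1$; the distributional limit $(\mu-\lambda)_+^{z}/\Gamma(z+1) \to \delta(\mu-\lambda)$ as $z \to -1$ identifies $T_{-1}(\lambda)$ with the target. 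On the right boundary $\Re z = b > 0$, $T_z(\lambda)$ factors as $Q_i(\lambda)$ times a bounded spectral multiplier $\varphi_z(\sqrt{L}/\lambda)$ times $Q_i^\prime(\lambda)$, so the spectral theorem together with the uniform $L^2$-boundedness of the $Q$'s delivers an $L^2 \to L^2$ estimate with polynomial growth in $|\Im z|$. On the left boundary $\Re z = a < 0$, I would integrate by parts in $\mu$, transferring $k \approx (n-1)/2$ derivatives from $dE_{\sqrt{L}}(\mu)$ onto the smooth weight $\chi(\mu/\lambda)(\mu-\lambda)_+^{z}$, and then invoke (iii) at order $k$ to bound the kernel of $T_z$ pointwise by a multiple of $\lambda^{n-1}$, thereby obtaining an $L^1 \to L^\infty$ estimate on this boundary with the same polynomial control in $|\Im z|$.

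Stein's theorem on analytic interpolation of operators then delivers, at the intermediate value $z = z_0 = -1$ corresponding to the Stein--Tomas exponent $p = 2(n+1)/(n+3)$:
\begin{equation*}
\|Q_i(\lambda)\, dE_{\sqrt{L}}(\lambda)\, Q_i^\prime(\lambda)\|_{L^p \to L^{p'}} \leq C\lambda^{n(1/p - 1/p') - 1},
\end{equation*}
and summation over the partition, together with the $TT^\ast$ reduction of the off-diagonal pairs, yields \eqref{eqn:abstract restriction} at the endpoint. The remaining range $p \in [1, 2(n+1)/(n+3))$ follows by Riesz--Thorin interpolation against the trivial $L^1 \to L^1$ bound furnished directly by (iii) at $k = 0$.

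I expect the principal obstacle to be the $L^1 \to L^\infty$ estimate on the left boundary. The $k = 0$ case of (iii) alone does not suffice: the spatial decay $(1 + \mu w)^{-(n-1)/2}$ is precisely the critical rate at which naive integration against the Riesz weight $(\mu - \lambda)_+^{z}$ is borderline, so genuine use of the higher-order estimates in (iii) is necessary via an iterated integration-by-parts scheme. A secondary technical point is keeping the constants polynomial in $|\Im z|$ as required by Stein's admissibility condition, while simultaneously verifying that the family remains holomorphic through the putative singularity of $\Gamma(z+1)^{-1}$ at $z = -1$ — which is exactly where the distributional $\delta$-limit produces the target operator.
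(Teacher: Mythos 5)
Your overall architecture coincides with the paper's: the $TT^\ast$ reduction through the factorization \eqref{eqn:factorization} and the partition of unity \eqref{eqn:partition of unity} to the diagonal microlocalized pieces, the analytic family $\chi_+^{z}(\lambda-\sqrt{L})$ with a compactly supported spectral cutoff (which is precisely the paper's ``key innovation'' $\phi(\sqrt{L}/\lambda)$, needed because \eqref{eqn:spectral measure estimates} is only assumed for $\mu/\lambda\in(1-\delta,1+\delta)$), and Stein's complex interpolation between an $L^2\to L^2$ line and an $L^1\to L^\infty$ line. Two small corrections to that frame: the right boundary must be $\Re z=0$ (not $\Re z=b>0$), otherwise the interpolation point $z=-1$ lands strictly inside the admissible range and you miss the endpoint $p=2(n+1)/(n+3)$; and the remaining exponents are obtained by interpolating against the $L^1\to L^\infty$ bound at $p=1$ (which is what \eqref{eqn:spectral measure estimates} with $k=0$ gives), not against an ``$L^1\to L^1$'' bound.

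The genuine gap is at the step you yourself flag as the principal obstacle: the $L^1\to L^\infty$ bound on the line $\Re z=-(n+1)/2$. An ``iterated integration-by-parts scheme'' does not close, for a concrete counting reason (take $n$ odd, $n=2k+1$). To turn $\chi_+^{-(n+1)/2+is}$ into a locally integrable convolution kernel you must transfer at least $(n+1)/2=k+1$ derivatives onto the spectral measure; but \eqref{eqn:spectral measure estimates} at order $k+1$ carries the factor $(1+\mu w)^{-(n-1)/2+k+1}=(1+\mu w)^{+1}$, which \emph{grows} in $w(z,z')$, so the resulting kernel bound is $\lambda^{(n-1)/2}(1+\lambda w)$ rather than the required $\lambda^{(n-1)/2}$. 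Stopping instead at order $k\approx(n-1)/2$, as you propose, leaves the residual kernel $\chi_+^{-1+is}$, which is still a distribution of positive order, so the integral is not absolutely convergent and yields no pointwise bound at all. The missing ingredient is the log-convexity inequality of Lemma~\ref{GHS lemma}, $\|\chi_+^{b+is}\ast f\|_\infty\le C(1+|s|)e^{\pi|s|/2}\|\chi_+^{a}\ast f\|_\infty^{\theta}\|\chi_+^{c}\ast f\|_\infty^{1-\theta}$, applied with $a,c$ corresponding to $k-1$ and $k+1$ derivatives of $\phi(\sigma)\,dE_{\sqrt{L}}(\lambda\sigma)$: the geometric mean of the decaying factor $(1+\lambda w)^{-1}$ and the growing factor $(1+\lambda w)^{+1}$ is exactly $1$, and only through this cancellation does the estimate of Lemma~\ref{lemma:1-infinty} close (the even-dimensional case is analogous with exponents $\mp 1/2$). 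Without this lemma, or an equivalent three-lines argument in the order of $\chi_+$, the left boundary estimate fails and the interpolation cannot be run.
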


\begin{remark}\label{remark}There exist manifolds with boundary $(M, g)$ admitting the factorization \eqref{eqn:factorization}. For example, asymptotically hyperbolic manifolds and asmyptotically conic manifolds. On the other hand, the function $w$ is usually the geodesic distance function on $M$; in the meantime, the operators $Q_i$ from partition of unity \eqref{eqn:partition of unity} can be thought of as (micro)-localization for spectral measure in \eqref{eqn:spectral measure estimates} when there are pairs of conjugate points on $M$. Since arising conjugate points will damage the global estimates, it is necessary to localize the targeted operator.\end{remark}

However, the given proof in \cite{Guillarmou-Hassell-Sikora} deals only with the easier case when the spectral measure yields $$\bigg| \bigg( \frac{d^j}{d \lambda^j} \Big(dE_{\sqrt{L}}(\lambda)\Big) \bigg) (z, z') \bigg| \leq C \lambda^{n - 1 - j} \big(1 + \lambda w(z, z')\big)^{-(n-1)/2 + j}.$$ In terms of the language of manifolds, it only gives the restriction theorem on  manifolds without conjugate points. But it is not obvious how to complete the proof in the gerneral case where there is a partition of unity.

The aim of the present work is to give a complete proof of Theorem \ref{abstract restriction theorem}. For the application to asymptotically conic manifolds, Guillarmou, Hassell and Sikora \cite{Guillarmou-Hassell-Sikora} proved the spectral measure estimates \eqref{eqn:spectral measure estimates} in accordance with the partition of unity \eqref{eqn:partition of unity}. Therefore, we will obtain Stein-Tomas restriction theorem on such manifolds, once Theorem \ref{abstract restriction theorem} is justified. Additionally, Hassell and the author \cite{Chen-Hassell2} showed better spectral measure estimates on asymptotically hyperbolic manifolds than \eqref{eqn:spectral measure estimates}. Using that together with Kunze-Stein phenomenon, the authors established a stronger restriction theorem.  

My PhD supervisor Andrew Hassell noticed the gap in their joint paper while we were working on the spectral measure on asymptotically hyperbolic manifolds. Colin Guillarmou also shared some useful thoughts for the present proof. I am indebted to them for their help. This work is supported by a Riemann fellowship and finished during my visit to the Riemann Center for Geometry and Physics at the Leibniz Universit\"{a}t Hannover. I hereby thank the Institut f\"{u}r Analysis for their hospitality.

\section{Proof of Theorem}

First of all, we use a standard $TT^\ast$ argument to reduce the restriction theorem to \begin{equation}\label{eqn:microlocalized restriction}\|Q_i(\lambda)\phi(\sqrt{L}/\lambda)dE_{\sqrt{L}}(\lambda)Q_i^\prime(\lambda)\|_{L^p \rightarrow L^{p'}} \leq C \lambda^{n(1/p - 1/p') - 1}, \end{equation} where $\phi$ is a smooth function supported on $(1 - \delta, 1 + \delta)$ and equal to $1$ on $(1 - \delta/2, 1 + \delta/2)$ provided $\delta$ is a very small number. This idea was used in  \cite[p. 913]{Guillarmou-Hassell-Sikora}, but the key difference here is the cut-off function $\phi$. Assume \eqref{eqn:microlocalized restriction} for the moment. It yields that \begin{eqnarray*}\|Q_i(\lambda)dE_{\sqrt{L}}(\lambda)Q_i^\prime(\lambda)\|_{L^p \rightarrow L^{p'}} &=& \|Q_i(\lambda)\phi(\sqrt{L}/\lambda)dE_{\sqrt{L}}(\lambda)Q_i^\prime(\lambda)\|_{L^p \rightarrow L^{p'}} \\&\leq& C \lambda^{n(1/p - 1/p') - 1},\end{eqnarray*} for all $\lambda$. The factorization \eqref{eqn:factorization} and the $TT^\ast$ trick gives $$\|Q_i(\lambda)P(\lambda)\|_{L^2 \rightarrow L^{p'}}  \leq C \lambda^{n(1/2 - 1/p') - 1/2},$$ for all $\lambda$. By the partition of unity \eqref{eqn:partition of unity}, we sum them up over $i$ and deduce the global estimates $$\|P(\lambda)\|_{L^2 \rightarrow L^{p'}}  \leq C \lambda^{n(1/2 - 1/p') - 1/2},$$ for all $\lambda$. The restriction estimates \eqref{eqn:abstract restriction} follow from applying the factorization \eqref{eqn:factorization} and the $TT^\ast$ trick again.

It now suffices to prove \eqref{eqn:microlocalized restriction} at the endpoints $1$ and $2(n + 1)/(n + 3)$. We only need to show the latter as the former is immediate from the spectral measure estimates \eqref{eqn:spectral measure estimates}. For the endpoint $2(n + 1)/(n + 3)$, we consider the analytic family $Q_i(\lambda)\phi(\sqrt{L}/\lambda)\chi_+^{a}(\lambda - \sqrt{L})Q_i'(\lambda)$ with $a \in \mathbb{C}$. For $\Re a > -1$, $\chi_+^a$ is defined by $$
\chi_+^a = \frac{x_+^a}{\Gamma(a + 1)} \quad \mbox{with} \quad x_+^a = \bigg\{ \begin{array}{r@{\quad \mbox{if} \quad}l}x^a & x \geq 0\\ 0 & x < 0\end{array} ,  \quad \Re a > -1,
$$ where $\Gamma$ is the gamma function. Using the identity
$$\frac{d}{dx}\chi_+^a = \chi_+^{a - 1},\mbox{when $\Re a > 0$},$$ we extend $\chi_+^a$ to the entire complex $a$-plane. Since $\chi_+^0 = H(x)$, we have $\chi_+^{-1} = \delta_0$, and more generally $\chi_+^{-k} = \delta_0^{(k - 1)}$. Therefore,
\begin{equation}
\chi_+^0(\lambda - P) = E_{P}\big((0, \lambda]\big) \quad
\mbox{and} \quad \chi_+^{-k}(\lambda - P) =
\Big(\frac{d}{d\lambda}\Big)^{k - 1} dE_{P}(\lambda).\label{chi-k}\end{equation} 
This family of distributions obey following convolution identity \cite[p. 86]{Hormander1} 
\begin{equation}
\label{eqn:convolution id} \chi_+^\mu \ast \chi_+^\nu = \chi_+^{\mu + \nu + 1}, \quad \mbox{for any $\mu,\nu \in \mathbb{C},$ } \end{equation} as well as following inequality, \begin{lemma}[\cite{Guillarmou-Hassell-Sikora}]\label{GHS lemma}Given $k \in \mathbb{N}$, suppose $- k < a < b < c$ and $0 < \theta < 1$ with $b = \theta a + (1 - \theta)c$. For any compactly supported complex valued function $f \in C^{k - 1}(\mathbb{R})$, we have \begin{equation}\|\chi_+^{b + is} \ast f \|_\infty \leq C (1 + |s|) e^{\pi |s| / 2} \|\chi_+^a \ast f\|_\infty^\theta \|\chi_+^c \ast f\|_\infty^{1 - \theta},\end{equation} for all $s \in \mathbb{R}$.\end{lemma}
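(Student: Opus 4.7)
The plan is to prove Lemma \ref{GHS lemma} by means of Hadamard's three-lines theorem applied to a holomorphic function built from the complex powers $\chi_+^z$.

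First, I would carry out a reduction to the case where $-1 < a < b < c$. Because $f \in C^{k-1}$ is compactly supported, iterated integration by parts combined with the identity $(d/dx)\chi_+^z = \chi_+^{z-1}$ yields $\chi_+^z * f = \chi_+^{z+k-1} * f^{(k-1)}$, an operation that preserves the $L^\infty$ norms appearing in the lemma while shifting all of $a,b,c$ above $-1$. Hence we may assume from now on that $-1 < a < b < c$ and that $f \in C^0$ is compactly supported.

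Next, fix a point $x_0 \in \mathbb{R}$ and introduce the auxiliary function
\[
F(z) \;:=\; \Gamma(z+1)\,(\chi_+^z * f)(x_0) \;=\; \int_0^\infty u^z\, f(x_0 - u)\,du,
\]
which is holomorphic for $\Re z > -1$. Compact support of $f$ confines the integral to a bounded range of $u$, so $F$ is uniformly bounded in $\Im z$ on the closed strip $\{a \leq \Re z \leq c\}$ for each fixed $x_0$; a standard regularization $e^{\epsilon z^2}F(z)$ (with $\epsilon \to 0$ at the end) secures the Phragm\'en--Lindel\"of decay hypothesis. On the boundary line $\Re z = a$, the trivial estimate $|u^{is}| = 1$ gives
\[
\sup_{s \in \mathbb{R}}|F(a+is)| \;\leq\; \int_0^\infty u^a\,|f(x_0 - u)|\,du,
\]
uniformly in $s$, and analogously on $\Re z = c$. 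Hadamard's three-lines theorem then interpolates these two bounds to produce an estimate for $F(b+is)$; after dividing by $|\Gamma(b+is+1)|$ and invoking Stirling's asymptotics $|\Gamma(b+is+1)|^{-1} \lesssim (1+|s|)^{-b-1/2}\,e^{\pi|s|/2}$, one recovers the $(1 + |s|)\,e^{\pi|s|/2}$ growth factor demanded in the conclusion. Taking the supremum over $x_0$ then delivers the claimed $L^\infty$ inequality.

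The main obstacle is the boundary bound: the naive estimate above produces $\|\chi_+^\sigma * |f|\|_\infty$ rather than the signed quantity $\|\chi_+^\sigma * f\|_\infty$ that appears on the right-hand side of the lemma. Closing this gap is the technical heart of the argument. The cleanest route is to first treat the case of nonnegative $f$, in which the two quantities agree and the scheme above concludes immediately, and then extend to general complex $f$ either by a real/imaginary-part decomposition or, more naturally, by inserting a Stein-type analytic family of unit-modulus phase multipliers into the definition of $F(z)$, so that the three-lines interpolation runs between genuine linear functionals whose boundary values are precisely controlled by $\|\chi_+^a * f\|_\infty$ and $\|\chi_+^c * f\|_\infty$.
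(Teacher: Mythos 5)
The paper does not actually prove this lemma; it is imported verbatim from Guillarmou--Hassell--Sikora, so your argument has to stand on its own. Its skeleton is reasonable: the reduction to $-1<a<b<c$ via $\chi_+^{z}\ast f=\chi_+^{z+k-1}\ast f^{(k-1)}$ is correct (it leaves all three $L^\infty$ norms in the statement literally unchanged while shifting the exponents up by $k-1$), and a three-lines argument for the $\Gamma$-normalized family $F(z)=\Gamma(z+1)(\chi_+^{z}\ast f)(x_0)$ is the natural framework.

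The fatal problem is exactly the one you flag at the end, and neither proposed repair closes it. The boundary bound $|F(a+is)|\le\int_0^\infty u^a|f(x_0-u)|\,du$ controls $F$ on $\Re z=a$ by $\|\chi_+^a\ast|f|\|_\infty$, and this is \emph{not} dominated by any constant times $\|\chi_+^a\ast f\|_\infty$: for $f(x)=\cos(Nx)\psi(x)$ with $\psi$ a fixed bump, non-stationary phase gives $\chi_+^a\ast f\to0$ as $N\to\infty$, while $\chi_+^a\ast|f|$ stays bounded away from zero, so the ratio of the two norms is unbounded. The version of the lemma with $|f|$ on the right is strictly weaker and useless in the present application, where $f$ is a derivative of the localized spectral measure and the whole point is to exploit its cancellation. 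Your patch (i), decomposing $f$ into positive/negative or real/imaginary parts, destroys precisely this cancellation --- $\|\chi_+^a\ast f_\pm\|_\infty$ is not controlled by $\|\chi_+^a\ast f\|_\infty$ --- and treating $f\ge0$ first buys nothing, because the right-hand side is not a norm of $f$ itself, so there is no linearity or density mechanism to pass to general $f$; the general case is exactly as hard as the original statement. Your patch (ii), ``inserting a Stein-type family of unit-modulus phase multipliers,'' is not a construction: you would have to specify the family and explain why its boundary values are controlled by $\|\chi_+^a\ast f\|_\infty$ rather than $\|\chi_+^a\ast|f|\|_\infty$. The standard way to set the interpolation up correctly is to use the convolution identity \eqref{eqn:convolution id} to write $\chi_+^{z}\ast f=\chi_+^{z-\sigma-1}\ast(\chi_+^{\sigma}\ast f)$, so that the required boundary input on $\Re z=\sigma$ ($\sigma=a,c$) becomes the one-line estimate $\|\chi_+^{\sigma+is}\ast f\|_\infty\le C(1+|s|)e^{\pi|s|/2}\|\chi_+^{\sigma}\ast f\|_\infty$; proving that estimate (which is where the factor $(1+|s|)$ in the statement actually comes from) is the real content of the lemma, and it is entirely absent from your proposal.
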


Compared with \cite{Guillarmou-Hassell-Sikora}, the key innovation is to include the cut-off function $\phi$. It has no effect on the spectral measure, but allows one to localize the $\chi_+^a$ operator.

Back to the analytic family $Q_i(\lambda)\phi(\sqrt{L}/\lambda)\chi_+^{a}(\lambda - \sqrt{L})Q_i'(\lambda)$, the $L^2 \rightarrow L^2$ boundedness on the line $\{is : s \in \mathbb{R}\}$ $$\|Q_i(\lambda)\phi(\sqrt{L}/\lambda)\chi_+^{is}(\lambda - \sqrt{L})Q_i'(\lambda)\|_{L^2 \rightarrow L^2} \leq C e^{\pi |s|/2} $$ is clear. On the other hand, we have to show the $(1, \infty)$ boundedness on the line $\{-(n + 1)/2 + is : s \in \mathbb{R}\}$, namely \begin{lemma}\label{lemma:1-infinty} $$\|Q_i(\lambda)\phi(\sqrt{L}/\lambda)\chi_+^{-(n + 1)/2 + is}(\lambda - \sqrt{L})Q_i'(\lambda)\|_{L^1 \rightarrow L^\infty} \leq C (1 + |s|) e^{\pi |s|/2} \lambda^{(n - 1)/2}.$$\end{lemma}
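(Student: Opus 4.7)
The plan is to reduce the lemma to an application of Lemma~\ref{GHS lemma} on the real line. For fixed $\lambda > 0$ and $z, z' \in X$, introduce
\[
f_\lambda(\mu) := \phi(\mu/\lambda)\,\bigl(Q_i(\lambda)\, dE_{\sqrt{L}}(\mu)\, Q_i'(\lambda)\bigr)(z,z'),
\]
which is a compactly supported function of $\mu \in ((1-\delta)\lambda, (1+\delta)\lambda)$, thanks to the cutoff $\phi$. By the functional calculus, the Schwartz kernel of $Q_i(\lambda)\phi(\sqrt{L}/\lambda)\chi_+^a(\lambda - \sqrt{L})Q_i'(\lambda)$ evaluated at $(z,z')$ equals $(\chi_+^a \ast f_\lambda)(\lambda)$. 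Hence it suffices to prove
\[
\bigl\| \chi_+^{-(n+1)/2 + is} \ast f_\lambda \bigr\|_{L^\infty(\mathbb{R})} \le C(1+|s|) e^{\pi|s|/2}\lambda^{(n-1)/2}
\]
uniformly in $z, z'$.

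First I would combine hypothesis~(iii) with Leibniz's rule applied to the product $f_\lambda = \phi(\cdot/\lambda)\cdot g$, where $g(\mu) = (Q_i(\lambda)\, dE_{\sqrt{L}}(\mu)\, Q_i'(\lambda))(z,z')$. Since each derivative landing on $\phi(\cdot/\lambda)$ costs only $\lambda^{-1}$, the dominant term differentiates $g$ fully, and one obtains, for $\mu$ in the support and any integer $k \ge 0$,
\[
|f_\lambda^{(k)}(\mu)| \le C\,\lambda^{n-1-k}(1+\lambda w(z,z'))^{k-(n-1)/2}.
\]
Using $\chi_+^{-k} = \delta_0^{(k-1)}$, so that $\chi_+^{-k}\ast f_\lambda = f_\lambda^{(k-1)}$ for integer $k \ge 1$, this rewrites as
\[
\|\chi_+^{-k} \ast f_\lambda\|_\infty \le C\,\lambda^{n-k}(1+\lambda w(z,z'))^{k-(n+1)/2}.
\]

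Then I would invoke Lemma~\ref{GHS lemma} with $b = -(n+1)/2$ and two negative integers $a = -k_a$, $c = -k_c$ chosen so that $k_c < (n+1)/2 < k_a$; for instance $k_c = \lfloor n/2\rfloor$ and $k_a = \lceil n/2\rceil + 1$ (both positive as soon as $n \ge 2$), which yields $\theta = 1/2$. The interpolation relation $b = \theta a + (1-\theta) c$ forces $\theta k_a + (1-\theta) k_c = (n+1)/2$, so the interpolated exponents of $\lambda$ and of $(1+\lambda w)$ come out to be
\[
n - \bigl(\theta k_a + (1-\theta) k_c\bigr) = \tfrac{n-1}{2}, \qquad \bigl(\theta k_a + (1-\theta) k_c\bigr) - \tfrac{n+1}{2} = 0,
\]
respectively. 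The $(1+\lambda w)$ factor drops out entirely and Lemma~\ref{GHS lemma} delivers exactly the claimed bound.

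The chief subtlety is this $w$-cancellation: it works only because the exponent of $(1+\lambda w)$ in the $k$-th derivative estimate depends \emph{linearly} on the integer $k$, and because $b = -(n+1)/2$ is exactly the value at which this linear weight interpolates to zero. The cutoff $\phi$ is indispensable here, for without it $f_\lambda$ would not be compactly supported and Lemma~\ref{GHS lemma} would be inapplicable; this localisation is the new ingredient compared with \cite{Guillarmou-Hassell-Sikora}, and is what allows the argument to proceed in the presence of the partition of unity $\{Q_i\}$.
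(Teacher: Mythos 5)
Your proof is correct and follows essentially the same route as the paper: localize in $\mu$ with the cutoff $\phi$ so that the kernel becomes $(\chi_+^{-(n+1)/2+is}\ast f_\lambda)(\lambda)$ for a compactly supported smooth $f_\lambda$, then apply Lemma~\ref{GHS lemma} with integer endpoints and use hypothesis (iii) plus Leibniz. The only cosmetic difference is that you interpolate in one step with $b=-(n+1)/2$ between $a=-k_a$ and $c=-k_c$, whereas the paper first peels off $\chi_+^{-k}$ via the convolution identity \eqref{eqn:convolution id} and rescales $\sigma\mapsto\lambda\sigma$ before interpolating; the resulting endpoints ($\chi_+^{-k}\ast f_\lambda$ and $\chi_+^{-k-2}\ast f_\lambda$ for $n=2k+1$, $\chi_+^{-k}\ast f_\lambda$ and $\chi_+^{-k-1}\ast f_\lambda$ for $n=2k$) coincide with yours.
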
 If Lemma \ref{lemma:1-infinty} is true, \eqref{eqn:microlocalized restriction} will be proved by an application of Stein's complex interpolation \cite{interpolation} between the two lines.

\begin{proof}[Proof of Lemma \ref{lemma:1-infinty}]

 By \eqref{eqn:convolution id},  one may rewrite \begin{eqnarray*}\chi_+^{-(n + 1)/2 + is} = \left\{\begin{array}{ll}\chi_+^{-3/2 -is} \ast \chi_+^{-k} (\lambda) & n = 2k \\  \chi_+^{-2 - is} \ast \chi_+^{-k} (\lambda) & n = 2k + 1\end{array}\right. \end{eqnarray*}
 
When $n = 2k + 1$, we firstly write the operator in the following convolution form. \begin{eqnarray*}
\lefteqn{\phi(\sqrt{L}/\lambda)\chi_+^{-(n + 1)/2 + is}(\lambda - \sqrt{L})}\\ &=& \int \phi(\sigma/\lambda) \chi_+^{-2 - is} \ast \chi_+^{-k} (\lambda - \sigma) dE_{\sqrt{L}}(\sigma)\, d\sigma \\ &=& \iint \phi(\sigma/\lambda) \chi_+^{- 2 - is}(\alpha) \chi_+^{- k}(\lambda - \alpha - \sigma) dE_{\sqrt{L}}(\sigma)\, d\sigma d\alpha \\ &=& \lambda^{-k - is}\iint \chi_+^{- 2 - is}(\alpha) \chi_+^{- k}(1 - \alpha - \sigma) \phi(\sigma) dE_{\sqrt{L}}(\lambda\sigma)\,d\sigma d\alpha
\\ &=& \lambda^{-k - is}\int \chi_+^{- 2 - is}(\alpha)  \frac{d^{k - 1}}{d \sigma^{k - 1}}\Big(\phi(\sigma) dE_{\sqrt{L}}(\lambda\sigma)\Big)\bigg|_{\sigma = 1 - \alpha}\, d\alpha. \end{eqnarray*} Note the targeted operator $\phi(\sqrt{L}/\lambda)\chi_+^{-(n + 1)/2 + is}(\lambda - \sqrt{L})$ is now of convolution form  $f_1 \ast f_2 (1)$, where $f_1(\alpha) = \chi_+^{- 2 - is}(\alpha)$ and  $$f_2(\alpha) =  \lambda^{-k - is} \frac{d^{k - 1}}{d \alpha^{k - 1}}\Big(\phi(\alpha) dE_{\sqrt{L}}(\lambda\alpha)\Big).$$

Then we microlocalize the operator and apply Lemma \ref{GHS lemma}.\begin{eqnarray*}
\lefteqn{\bigg\|Q_i(\lambda)\phi(\sqrt{L}/\lambda)\chi_+^{-(n + 1)/2 + is}(\lambda - \sqrt{L})Q_i^\prime(\lambda)\bigg\|_\infty}\\ &=& \bigg|\lambda^{-k - is}\int \chi_+^{- 2 - is}(\alpha) Q_i(\lambda) \frac{d^{k - 1}}{d \sigma^{k - 1}}\Big(\phi(\sigma) dE_{\sqrt{L}}(\lambda\sigma)\Big)\bigg|_{\sigma = 1 - \alpha} Q_i^\prime(\lambda)\,  d\alpha \bigg|\\ &\leq& C (1 + |s|) e^{\pi |s| / 2}\lambda^{-k} \sup_{\Lambda}\bigg|\int \chi_+^{- 1}(\alpha) Q_i(\lambda) \frac{d^{k - 1}}{d \sigma^{k - 1}}\Big(\phi(\sigma) dE_{\sqrt{L}}(\lambda\sigma)\Big)\bigg|_{\sigma = \Lambda - \alpha} Q_i^\prime(\lambda)\,  d\alpha\bigg|^{1/2}\\ & & \quad\quad\,\sup_{\Lambda}\bigg|\int \chi_+^{- 3}(\alpha) Q_i(\lambda) \frac{d^{k - 1}}{d \sigma^{k - 1}}\Big(\phi(\sigma) dE_{\sqrt{L}}(\lambda\sigma)\Big)\bigg|_{\sigma = \Lambda - \alpha} Q_i^\prime(\lambda)\,  d\alpha\bigg|^{1/2}.\end{eqnarray*} 
We now plug in \eqref{chi-k} and get
\begin{eqnarray*}
\lefteqn{\bigg\|Q_i(\lambda)\phi(\sqrt{L}/\lambda)\chi_+^{-(n + 1)/2 + is}(\lambda - \sqrt{L})Q_i^\prime(\lambda)\bigg\|_{\infty}}\\ &\leq& C (1 + |s|) e^{\pi |s| / 2}  \lambda^{-(n - 1)/2} \sup_{\Lambda}\bigg| Q_i(\lambda) \frac{d^{k - 1}}{d \sigma^{k - 1}}\Big(\phi(\sigma) dE_{\sqrt{L}}(\lambda\sigma)\Big)\bigg|_{\sigma = \Lambda} Q_i^\prime(\lambda)\bigg|^{1/2}\\ & & \quad\quad \quad\quad\,\sup_{\Lambda}\bigg| Q_i(\lambda) \frac{d^{k + 1}}{d \sigma^{k + 1}}\Big(\phi(\sigma) dE_{\sqrt{L}}(\lambda\sigma)\Big)\bigg|_{\sigma = \Lambda } Q_i^\prime(\lambda)\bigg|^{1/2}. \end{eqnarray*}  Noting the support of $\phi$ is in $(1 - \delta, 1 + \delta)$, we can invoke the spectral measure estimates \eqref{eqn:spectral measure estimates}\begin{eqnarray*}
\lefteqn{\bigg\|Q_i(\lambda)\phi(\sqrt{L}/\lambda)\chi_+^{-(n + 1)/2 + is}(\lambda - \sqrt{L})Q_i^\prime(\lambda)\bigg\|_\infty}\\ &\leq&  C (1 + |s|) e^{\pi |s| / 2} \lambda^{(n - 1)/2} \sup_{\Lambda\in \text{supp}\phi}\sum_{j = 0}^{k - 1}\bigg| \Lambda^{n - 1 - j} (1 + \lambda \Lambda d(z, z'))^{- (n - 1)/2 + j}\bigg|^{1/2}\\ & & \quad\quad \quad\quad\,\sup_{\Lambda \in \text{supp}\phi}\sum_{j = 0}^{k + 1}\bigg|\Lambda^{n - 1 - j}(1 + \lambda\Lambda d(z, z'))^{- (n - 1)/2 + j}\bigg|^{1/2}\\ &\leq&C (1 + |s|) e^{\pi |s| / 2} \lambda^{(n - 1)/2},\end{eqnarray*} which proves the lemma in the odd-dimensional case.

When $n = 2k$, the proof is identical, apart from using the other expression of the operator, which is\begin{eqnarray*}
\lefteqn{\phi(\sqrt{L}/\lambda)\chi_+^{-(n + 1)/2 + is}(\lambda - \sqrt{L})} \\ &&= \lambda^{- (n - 1)/2 - is}\int \chi_+^{- 3/2 - is}(\alpha)  \frac{d^{k - 1}}{d \sigma^{k - 1}}\Big(\phi(\sigma) dE_{\sqrt{L}}(\lambda\sigma)\Big)\bigg|_{\sigma = 1 - \alpha}\, d\alpha.\end{eqnarray*}We run the same argument and get \begin{eqnarray*}
\lefteqn{\bigg\|Q_i(\lambda)\phi(\sqrt{L}/\lambda)\chi_+^{-(n + 1)/2 + is}(\lambda - \sqrt{L})Q_i^\prime(\lambda)\bigg\|_\infty}\\ &=& \bigg|\lambda^{- (n - 1)/2 - is}\int \chi_+^{- 3/2 - is}(\alpha) Q_i(\lambda) \frac{d^{k - 1}}{d \sigma^{k - 1}}\Big(\phi(\sigma) dE_{\sqrt{L}}(\lambda\sigma)\Big)\bigg|_{\sigma = 1 - \alpha} Q_i^\prime(\lambda)\,  d\alpha \bigg|\\  &\leq& C (1 + |s|) e^{\pi |s| / 2}\lambda^{-(n - 1)/2} \sup_{\Lambda}\bigg| Q_i(\lambda) \frac{d^{k - 1}}{d \sigma^{k - 1}}\Big(\phi(\sigma) dE_{\sqrt{L}}(\lambda\sigma)\Big)\bigg|_{\sigma = \Lambda} Q_i^\prime(\lambda)\bigg|^{1/2}\\ & & \quad\quad \quad\quad\,\sup_{\Lambda}\bigg| Q_i(\lambda) \frac{d^{k }}{d \sigma^{k}}\Big(\phi(\sigma) dE_{\sqrt{L}}(\lambda\sigma)\Big)\bigg|_{\sigma = \Lambda } Q_i^\prime(\lambda)\bigg|^{1/2} \\ &\leq&C (1 + |s|) e^{\pi |s| / 2} \lambda^{(n - 1)/2} \sup_{\Lambda\in \text{supp}\phi}\sum_{j = 0}^{k - 1}\bigg| \Lambda^{n - 1 - j} (1 + \lambda \Lambda d(z, z'))^{- (n - 1)/2 + j}\bigg|^{1/2}\\ & & \quad\quad \quad\quad\,\sup_{\Lambda \in \text{supp}\phi}\sum_{j = 0}^{ k }\bigg|\Lambda^{n - 1 - j}(1 + \lambda\Lambda d(z, z'))^{- (n - 1)/2 + j}\bigg|^{1/2}\\ &\leq&C (1 + |s|) e^{\pi |s| / 2} \lambda^{(n - 1)/2}.\end{eqnarray*} Now the proof is complete.\end{proof}

\section{Application to Asymptotically Conic Manifolds}

Consider an $n$-dimensional compact manifold with boundary $(M, g)$. The metric $g$ is smooth in the interior $M^\circ$ and takes the form $$g = \frac{dx^2}{x^4} + \frac{h(x)}{x^2},$$ in a collar neighborhood near $\partial M$, where $x$ is a boundary defining function for $M$ and $h(x)$ is a smooth one-parameter family of metrics on $\partial M$.  The Laplacian is denoted by $\Delta_g$.

Such a manifold admits the factorization \eqref{eqn:factorization}, where the factor $P(\lambda)$ is the family of Poisson operators. They map $L^2(\partial M)$ into the null space of $\Delta_g - \lambda^2$ and satisfy \eqref{eqn:factorization}. We refer the reader to Hassell and Vasy \cite{Hassell-Vasy}. On the other hand, Guillarmou, Hassell and Sikora \cite{Guillarmou-Hassell-Sikora} established the spectral measure estimates \eqref{eqn:spectral measure estimates} on $M$ in accordance with the partition of unity \eqref{eqn:partition of unity}. Here the $w$ is the geodesic distance function $d(z, z')$ and $Q_i$ is a family of pseudodifferential operators which microlocalize the spectral measure as mentioned in Remark \ref{remark}.  

Therefore, we obtain following Stein-Tomas restriction theorem on asymptotically conic manifolds, \begin{corollary}Let $(M, g)$ be an $n$-dimensional non-trapping asymptotically conic manifold. We have $$\|dE_{\sqrt{\Delta_g}}(\lambda)\|_{L^p(M) \rightarrow L^{p^\prime}(M)}  \leq C \lambda^{n(1/p - 1/p') - 1},$$ for all $\lambda > 0$.\end{corollary}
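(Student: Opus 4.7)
The plan is to apply Theorem \ref{abstract restriction theorem} directly with $L = \Delta_g$ and $w(z,z') = d(z,z')$, the Riemannian distance on $M$. So I need only verify that hypotheses (i)--(iii) hold on a non-trapping asymptotically conic manifold; all three are already in the literature, and the introduction indicates precisely where.

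First, for the factorization \eqref{eqn:factorization}, I would cite the Poisson operator construction of Hassell and Vasy \cite{Hassell-Vasy}. For each $\lambda > 0$ they build an operator $P(\lambda) \colon L^2(\partial M) \to C^\infty(M^\circ)$ whose range consists of generalized eigenfunctions of $\Delta_g$ at energy $\lambda^2$, and stationary scattering theory identifies $dE_{\sqrt{\Delta_g}}(\lambda)$ with $(2\pi)^{-1} P(\lambda) P(\lambda)^\ast$, which is exactly \eqref{eqn:factorization}.

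Next, for the operator partition of unity \eqref{eqn:partition of unity} and the microlocalized spectral measure estimates \eqref{eqn:spectral measure estimates}, I would invoke the main theorem of Guillarmou, Hassell and Sikora \cite{Guillarmou-Hassell-Sikora}. The non-trapping hypothesis ensures that the Schwartz kernel of $dE_{\sqrt{\Delta_g}}(\lambda)$ admits a clean description as a Legendrian distribution on a resolved double space, associated to the union of Lagrangian sheets coming from distinct geodesics joining $z$ and $z'$. Conjugate points can cause several sheets to coalesce, preventing a uniform pointwise bound; to deal with this, one constructs a finite semiclassical pseudodifferential partition $\{Q_i(\lambda)\}_{i=1}^{N(\lambda)}$ at scale $h = \lambda^{-1}$ whose microlocal supports are small enough that on each piece $Q_i(\lambda) \cdot Q_i'(\lambda)$ only a single Lagrangian contributes. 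On such a microlocalized piece, stationary phase applied to the oscillatory integral representation yields \eqref{eqn:spectral measure estimates} together with all of its $\mu$-derivatives.

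The key obstacle is the uniformity of the estimate over a small spectral window. Since Theorem \ref{abstract restriction theorem} is applied together with the cut-off $\phi(\sqrt{L}/\lambda)$ introduced in Section 2, the bound \eqref{eqn:spectral measure estimates} must be valid not merely at $\lambda$ but for all $\mu$ with $\mu/\lambda \in (1-\delta, 1+\delta)$, with constants independent of $\mu$; this forces the microlocal partition $\{Q_i(\lambda)\}$ to be chosen so that the same uniformity persists on a semiclassical neighborhood of frequency $\lambda$. Granting this---which is the content of the main technical theorem of \cite{Guillarmou-Hassell-Sikora}---Theorem \ref{abstract restriction theorem} applies verbatim and delivers the corollary.
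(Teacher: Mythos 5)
Your proposal matches the paper's own argument: the corollary is obtained by applying Theorem \ref{abstract restriction theorem} with $L = \Delta_g$ and $w = d$, citing Hassell--Vasy \cite{Hassell-Vasy} for the Poisson-operator factorization \eqref{eqn:factorization} and Guillarmou--Hassell--Sikora \cite{Guillarmou-Hassell-Sikora} for the microlocal partition of unity \eqref{eqn:partition of unity} and the spectral measure estimates \eqref{eqn:spectral measure estimates}. Your additional remarks on the Legendrian structure and the uniformity of \eqref{eqn:spectral measure estimates} over the window $\mu/\lambda \in (1-\delta,1+\delta)$ are consistent with what the cited results provide, so the argument goes through exactly as in the paper.
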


\begin{flushleft}
\vspace{0.3cm}\textsc{Mathematical Sciences
Institute\\Australian National University\\Canberra 0200, Australia}



\emph{E-mail address}: \textsf{herr.chenxi@outlook.com}

\end{flushleft}


\begin{thebibliography}{99}


\bibitem{Chen-Hassell2} X. Chen and A. Hassell, \emph{Resolvent and spectral measure on non-trapping asymptotically hyperbolic manifolds II: Spectral Measure, Restriction Theorem, Spectral Multiplier},  arXiv:1412.4427.


\bibitem{Guillarmou-Hassell-Sikora}C. Guillarmou, A. Hassell and A. Sikora, \emph{Restriction and spectral multiplier theorems on asymptotically conic manifolds}, Anal. PDE \textbf{6}(2013), No.\textbf{4}, 893 -- 950.

\bibitem{Hassell-Vasy}A. Hassell and A. Vasy, \emph{The spectral projections and the resolvent for scattering metrics}, J. Anal. Math. \textbf{79}(1999), 241-298.

\bibitem{Hormander1}L. H\"{o}rmander, \emph{The Analysis of Linear Partial Differential Operators I: Distribution Theory and Fourier Analysis (Second Edition)}, Springer-Verlag, 1990.

\bibitem{interpolation}E. M. Stein, \emph{Interpolation of linear operators}, Trans. Amer. Math. Soc. \textbf{83}(1956), 482-492.

\bibitem{Beijing lecture}E. M. Stein, \emph{Oscillatory integrals in Fourier analysis}, Beijing Lectures in Harmonic Analysis, Ann. of Math. Stud. \textbf{112}, 307-355, Princeton Univ. Press, Princeton, NJ, 1986.

\bibitem{Restriction survey}T. Tao, \emph{Recent progress on the restriction conjecture}, arXiv:math/0311181

\bibitem{Tomas}P. Tomas, \emph{A restriction theorem for the Fourier transform}, Bull. Amer. Math. Soc. \textbf{81}(1975), 477-478.








\end{thebibliography}
\end{document}